\newcommand{\ecr}[1]{\textcolor{red}{#1}}
\newcommand{\YR}[1]{\textcolor{blue}{#1}}
\begin{document}
\title*{Low-dimensional indecomposable representations of the braid group $B_3$}
% Use \titlerunning{Short Title} for an abbreviated version of
% your contribution title if the original one is too long
\author{Eric C. Rowell and Yuze Ruan}
% Use \authorrunning{Short Title} for an abbreviated version of
% your contribution title if the original one is too long

\institute{Eric C. Rowell \at Texas A\&M University, College Station, TX 77843,
USA, \email{rowell@tamu.edu}
\and Yuze Ruan \at Tsinghua University,  Beijing, 100084, China,
 \email{ryz22@mails.tsinghua.edu.cn}}
%
% Use the package "url.sty" to avoid
% problems with special characters
% used in your e-mail or web address
%

%\subjclass[2020]{20F36}
\maketitle

\abstract{In this note we give a complete classification of all indecomposable yet reducible representations of $B_3$ for dimensions $2$ and $3$ over an algebraically closed field $K$ with characteristic $0$, up to equivalence.  We illustrate their utility with an example.}

\section{Introduction}

The problem of classifying representations of the braid groups $B_n$ is unsurprisingly difficult, as is typically the case for infinite non-abelian discrete groups.  On the other hand, even for small $n$ and in low dimensions classifications can be invaluable in applications to 
 braided tensor categories \cite{ruma}, especially in questions relevant to quantum computation \cite{LRW,RW18}. Indeed, the classification of irreducible $B_3$ representations of dimension $\leq 5$ by Tuba and  Wenzl \cite{TW01} gave purchase on questions in \emph{loc. cit.}. In particular their classification shows that such irreducible representations are determined, up to finite ambiguity, by the eigenvalues of the image of a standard generators of $B_3$, and conversely show that irreducible representations always exist for a given list of eigenvalues, provided certain polynomials are non-vanishing.
 
 It is natural to consider indecomposable representations, which play a similar role in non-semisimple settings. 
In this paper we give a complete classification of all indecomposable but not irreducible representations (\textit{strictly indecomposable}) of $B_3$ for dimension up to 3 over an algebraic closed field $K$ of characteristic $0$. We find that there are many more forms a strictly indecomposable representation of $B_3$ may take, which is more complicated than the result of simple representations given by Wenzl and Tuba. More precisely, let $\sigma_1, \sigma_2$ be the generators of $B_3$, and $A$ is the linear endomorphism by which $\sigma_1$ acts on some representation $V$.  If $V$ is simple with dimension less than 3, it is uniquely determined up to equivalence by the eigenvalues of $A$, whereas for strictly indecomposable representations there may be further parameters and choices. In particular, in some cases there are relations among the eigenvalues of $A$, and there may be inequivalent indecomposable representations with the same $A$.
Our approach in this paper is computational and elementary: we first reduce $A$ to the Jordan normal form, then determine all possibilities for the common invariant subspace, then compute the solutions, finally we verify all the solutions are indeed inequivalent and indecomposable.
In the last section of this paper, we will also discuss an application and outline ways to extend our results.  The appendix contains details of the computer calculations. \\   

\textbf{Acknowledgments} The computations for this research were largely carried out while Y.R. was an undergraduate in Mathematics at Texas A\&M University, and the hospitality of that institution is gratefully acknowledged.  The research of E.C.R. is partially supported by US NSF grant DMS-2205962. Y.R. is supported by  Beijing Municipal Science \& Technology
Commission Z221100002722017.

\section{Preliminaries}
\begin{definition}
The Artin braid group $B_n$ is the group generated by $\sigma_ 1,\sigma_2,\ldots,\sigma_{n-1}$ satisfying relations $\sigma_{i}\sigma_{j} = \sigma_{j}\sigma_{i}$ if $|i-j|\geq 2$, and\ $$ \sigma_{i}\sigma_{i+1}\sigma_{i} = \sigma_{i+1}\sigma_{i}\sigma_{i+1},\  for\  1\leq i\leq n-2.$$
The group $B_3$ which is generated by two elements $\sigma_ 1,\sigma_2$ satisfying the relation $ \sigma_{1}\sigma_{2}\sigma_{1} = \sigma_{2}\sigma_{1}\sigma_{2}$.
\end{definition}

We refer to \cite{KT08} for more details on the structures of the braid group.

\iffalse
The following characterization of $B_3$ representations is readily verified:
\begin{lemma}
If $V=K^n$ is an $n$-dimensional vector space over $K$ and $A,B\in GL(n,K)$ satisfy $ABA=BAB$, then there is a unique group homomorphism $\rho:\ B_3\rightarrow GL(n,K)$ such that $\rho(\sigma_1)=A,\  \rho(\sigma_2)=B $, i.e the group representation is uniquely determined by  $A$ and $B$.

\end{lemma}

\begin{lemma}
Two $B_3$ representations $(V_1,\rho_1),(V_2,\rho_2)$, $\rho_i(\sigma_1)=A_i,\  \rho_i(\sigma_2)=B_i $ are equivalent if and only if there exists an invertible matrix $M\in GL(n)$ s.t.$$M^{-1}A_1M=A_2,\  M^{-1}B_1M=B_2$$ 

\end{lemma}

\fi

\begin{definition}{\textbf{Direct sums and indecomposable representations}} Let $G$ be a group.
If $(V,\varphi)$ and $(W,\rho)$ are both $G$-representations then:
\begin{enumerate}[label=\upshape(\alph*)%,ref=\theThm (\arabic*)]
]
    \item  ($V\oplus W,(\phi,\rho)$) is a $G$-representation.
    \item if $(V,\varphi)=(W_1\oplus W_2,(\rho_1,\rho_2))$, and $W_1\neq V, W_1\neq 0$, then $(V,\varphi)$ is \textbf{decomposable}. Otherwise, it is said to be \textbf{indecomposable}.
    \item $(V,\varphi)$ is \textbf{irreducible} if the only $\rho(G)$ invariant subspaces of $V$ are $0$ and $V$.
    \item We call a representation \textbf{strictly indecomposable} if it is indecomposable but not irreducible.
\end{enumerate}

\end{definition}

In what follows, for a $B_3$ represenation $(\rho,V)$ we set $\rho(\sigma_1)=A$ and $\rho(\sigma_2)=B $. Since $B_3$ is generated by $\sigma_1, \sigma_2$, we have
\begin{lemma}
The representation $(V,\rho)$ is reducible if and only if there exists a proper non-zero subspace $W\subset V$ which is an invariant subspace of both $A$ and $B$, and $(V,\rho)$ is indecomposable iff there are no proper non-zero invariant subspaces $W_i$ such that $V= W_1\oplus W_2$.
\end{lemma}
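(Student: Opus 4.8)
The plan is to reduce both assertions to a single observation: a subspace $W\subseteq V$ is invariant under the whole image $\rho(B_3)$ if and only if it is invariant under the two generators $A=\rho(\sigma_1)$ and $B=\rho(\sigma_2)$. The forward implication is immediate since $A,B\in\rho(B_3)$. For the converse, I would first note that because $\dim V<\infty$, if $AW\subseteq W$ then $A|_W$ is an injective, hence surjective, linear self-map of $W$, so $A^{-1}W\subseteq W$ as well, and likewise for $B$. Every element $g\in B_3$ is a finite word in $\sigma_1^{\pm1},\sigma_2^{\pm1}$, and $\rho(g)$ is the corresponding word in $A^{\pm1},B^{\pm1}$; applying the invariance step by step shows $\rho(g)W\subseteq W$ for all $g\in B_3$. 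This establishes the observation.

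Given this, the first assertion is just unwinding definitions: $(V,\rho)$ is reducible (i.e.\ not irreducible) precisely when there is a $\rho(B_3)$-invariant subspace $W$ with $0\neq W\neq V$, which by the observation is the same as asking for a proper nonzero $W$ with $AW\subseteq W$ and $BW\subseteq W$.

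For the second assertion I would invoke part (a) of the definition of direct sums: a representation of the form $(W_1\oplus W_2,(\rho_1,\rho_2))$ has each $W_i$ a $\rho$-invariant subspace; conversely, if $V=W_1\oplus W_2$ as vector spaces with both summands $\rho(B_3)$-invariant, then setting $\rho_i=\rho|_{W_i}$ exhibits $(V,\rho)$ as $(W_1\oplus W_2,(\rho_1,\rho_2))$. Hence $(V,\rho)$ is decomposable exactly when $V$ admits a decomposition into proper nonzero invariant summands, and by the observation these summands are precisely the simultaneously $A$- and $B$-invariant ones. Negating yields the stated characterization of indecomposability.

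I expect no real obstacle here; the only point needing a moment's care is the claim that $A,B$-invariance of $W$ forces $A^{-1},B^{-1}$-invariance, which uses finite-dimensionality (an injective endomorphism of a finite-dimensional space is surjective). Everything else is a direct translation of the definitions of reducible, indecomposable, and direct sum.
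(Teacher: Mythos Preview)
Your proposal is correct and follows essentially the same approach as the paper's (suppressed) proof: reduce both claims to the observation that $\rho(B_3)$-invariance of a subspace is equivalent to simultaneous $A$- and $B$-invariance, using that $\sigma_1,\sigma_2$ generate $B_3$. You are in fact slightly more careful than the paper, which writes only that every $\rho(g)$ is ``a word containing only $A,B$'' without explicitly addressing the inverses; your remark that finite-dimensionality forces $A^{-1}W\subseteq W$ and $B^{-1}W\subseteq W$ from $AW\subseteq W$ and $BW\subseteq W$ fills that small gap.
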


The following is synthesized from \cite{TW01}, focusing on the $2$ and $3$ dimensional cases.
\begin{theorem}[\cite{TW01}]\label{TWthm23}
 Any $d=2$ or $d=3$ dimensional irreducible representation $\rho$ of $B_3$ over $K$ of is equivalent to one of the form:
\[A=\begin{pmatrix}
\lambda_1&\lambda_1\\
0&\lambda_2
\end{pmatrix},\ \ \ \ \  B=\begin{pmatrix}
\lambda_2&0\\
-\lambda_2&\lambda_1
\end{pmatrix}\ \ \ \ \   \text{for}\  \ d=2,\]

\[A=\begin{pmatrix}
\lambda_1&\lambda_1\lambda_3\lambda_2^{-1}+\lambda_2&\lambda_2\\
0&\lambda_2&\lambda_2\\
0&0&\lambda_3\\
\end{pmatrix},\ \ \ \ \   B=\begin{pmatrix}
\lambda_3&0&0\\
-\lambda_2&\lambda_2& 0\\
\lambda_2&-\lambda_1\lambda_3\lambda_2^{-1}-\lambda_2&\lambda_1
\end{pmatrix}\ \ \ \ \   \text{for}\  \ d=3.\]
where $\lambda_i\in K^\times$.
Moreover, these representations {are} irreducible if and only if the $\lambda_i$ do not satisfy $\lambda_1^2-\lambda_1\lambda_2+\lambda_2^2=0$ for $d=2$ or $(\lambda_1^2+\lambda_2\lambda_3)(\lambda_2^2+\lambda_1\lambda_3)(\lambda_3^2+\lambda_1\lambda_2)=0$ for $d=3$.
\end{theorem}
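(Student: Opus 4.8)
We outline the argument, which is essentially the one of \cite{TW01} specialized to low dimension. There are two assertions: that every irreducible representation of dimension $2$ or $3$ is equivalent to one on the displayed list, and that a displayed pair $(A,B)$ is irreducible exactly when the indicated polynomial is nonzero. For the first assertion the plan is to pass to a normal form. In $B_3$ one has $\sigma_2=(\sigma_1\sigma_2)\,\sigma_1\,(\sigma_1\sigma_2)^{-1}$, so $B:=\rho(\sigma_2)$ is conjugate to $A:=\rho(\sigma_1)$; hence $A$ and $B$ share an eigenvalue multiset $\{\lambda_i\}\subset K^\times$ (nonzero because $A$ is invertible), and $A$ cannot be scalar, since otherwise every subspace of $V$ would be $A$-invariant and a $B$-eigenvector would span a proper invariant subspace. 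Following \cite{TW01}, I would then choose a complete $A$-invariant flag and a complete $B$-invariant flag and show that irreducibility forces them into opposite position: for $d=2$ this is immediate, since otherwise some $A$-invariant line would also be $B$-invariant, hence $\rho(B_3)$-invariant; for $d=3$ it is checked case by case according to the coincidence pattern of the $\lambda_i$, equivalently the Jordan type of $A$ (a single $3\times3$ Jordan block; a $2\times2$ block plus a separate eigenvalue, equal to it or not; or a diagonalizable $A$, possibly with repetitions). In a basis adapted to such a pair of flags, $A$ is upper triangular with diagonal $(\lambda_1,\dots,\lambda_d)$ and $B$ is lower triangular with the reversed diagonal $(\lambda_d,\dots,\lambda_1)$.

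With $A$ and $B$ in this triangular shape, $ABA=BAB$ becomes a finite polynomial system in the few off-diagonal entries, and I retain the freedom to conjugate by the centralizer of $A$ (the maximal torus when the $\lambda_i$ are distinct, enlarged by unipotent elements when some coincide). Carrying out the elimination and then normalizing by this residual freedom, I expect the solution set to be a single equivalence class, represented by the displayed pair $(A,B)$ with the $\lambda_i$ as free parameters in $K^\times$. For $d=2$ one sees this quickly: take $A=\operatorname{diag}(\lambda_1,\lambda_2)$, or a single Jordan block if $\lambda_1=\lambda_2$, write out a general conjugate $B$, solve the four scalar equations, and then a diagonal conjugation and a change of basis bring the pair to the displayed form. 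Finally I would determine exactly which members of the list are equivalent to each other, for instance by comparing the eigenvalues of $\rho(\sigma_1)$ and the traces of $\rho$ on a few short words, thereby pinning down the ``finite ambiguity'' mentioned in the introduction.

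For the irreducibility criterion, since the matrices are explicit I would simply enumerate the common $A$- and $B$-invariant subspaces. In dimension $2$ this amounts to finding a common eigenvector: the eigenlines of $A$ are $\langle e_1\rangle$ and $\langle(\lambda_1,\lambda_2-\lambda_1)\rangle$, those of $B$ are $\langle e_2\rangle$ and $\langle(\lambda_1-\lambda_2,\lambda_2)\rangle$, and the only coincidence that can occur, $(\lambda_1,\lambda_2-\lambda_1)\parallel(\lambda_1-\lambda_2,\lambda_2)$, happens exactly when $\lambda_1^2-\lambda_1\lambda_2+\lambda_2^2=0$; when it holds that common eigenline is the required invariant subspace. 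In dimension $3$ one must search both for a common invariant line and for a common invariant plane (the latter being a common invariant line of the dual representation $\sigma_i\mapsto(\rho(\sigma_i)^{-1})^{T}$, which again satisfies the braid relation); the analogous computation shows that such a subspace exists if and only if one of $\lambda_1^2+\lambda_2\lambda_3$, $\lambda_2^2+\lambda_1\lambda_3$, $\lambda_3^2+\lambda_1\lambda_2$ vanishes.

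The delicate point --- and the reason a fully self-contained argument is long --- is the interaction of the normal-form step with the $d=3$ elimination: one must verify that irreducibility really does force the two invariant flags into opposite position for every Jordan type of $A$, and then that each Jordan type nevertheless leads back to the same single family of solutions, i.e. the rigidity phenomenon discovered by Tuba and Wenzl. This is precisely what \cite{TW01} provides, so in practice I would quote their normalization and devote the write-up to transcribing it into the $d=2,3$ forms above and to the elementary invariant-subspace count that produces the irreducibility conditions.
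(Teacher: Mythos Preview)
The paper does not supply its own proof of this theorem: it is stated as a quotation from \cite{TW01}, and no argument beyond the citation is given. Your proposal---outline the Tuba--Wenzl normal-form argument and then defer to \cite{TW01} for the details---is therefore entirely consistent with, and indeed more generous than, what the paper itself does.
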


\iffalse
\begin{proof}
$(W,\phi)$ is a subrepresentation of $(V, \rho)$ iff $\forall\  g\in B_3$, $W$ is the invariant subspace of $\rho(g)$, in particular, $W$ is the common invariant subspace for $A,B$. For another direction, since $\forall\  g\in B_3$, $\rho(g)$  can be written as a word contains only $A,B$, then if $W$ is the common invariant space of $A,B$, it's also the invariant space of $\rho(g)$, thus a subrepresentation, then second statement follows directly from re-writing definition 2.2. 
\end{proof}

\begin{remark}
Since equivalence of representations preserves irreducibility and indecomposability, we can classify them up to equivalence.
\end{remark}
\fi

It will be useful to assume that $A$ is in Jordan form, so that the standard basis forms a basis of generalized eigenvectors. Since any $A$-invariant subspace is spanned by generalized eigenvectors, we have:

\begin{lemma}[\cite{TW01}]\label{lem:bi-ei}
 If $\{e_i\}_{1\leq i\leq n}$ is a basis of generalized eigenvectors of $A$ then $\{b_i\}_{1\leq i\leq n}$ is a basis of generalized eigenvectors of $B$ where $b_i=ABAe_i$.
\end{lemma}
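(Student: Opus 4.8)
The plan is to produce an explicit invertible operator that conjugates $A$ to $B$ — namely $M:=ABA$ itself — and then transport the generalized eigenspace decomposition of $A$ across to $B$. First I would record that, since $\rho(\sigma_1)=A$ and $\rho(\sigma_2)=B$ lie in $GL(n,K)$, the operator $M=ABA$ is invertible, with $M^{-1}=A^{-1}B^{-1}A^{-1}$.

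The key step is the identity $MAM^{-1}=B$. This is a one-line manipulation of the braid relation:
\[
MAM^{-1}=(ABA)\,A\,(A^{-1}B^{-1}A^{-1})=ABAB^{-1}A^{-1},
\]
and substituting $ABA=BAB$ into the leading three factors gives $BABB^{-1}A^{-1}=B$. (Equivalently, $M^{-1}BM=A$.) I expect this to be essentially the only content of the proof: once one notices that the braid relation says precisely that $ABA$ conjugates $A$ to $B$, everything else is formal, so the mild ``obstacle'' is just spotting the right conjugator rather than any nontrivial estimate or case analysis.

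Given the intertwining identity, for any scalar $\lambda\in K$ one has $B-\lambda I=M(A-\lambda I)M^{-1}$ and hence $(B-\lambda I)^{k}=M(A-\lambda I)^{k}M^{-1}$ for every $k\ge 1$. Therefore, if $e$ is a generalized eigenvector of $A$ for the eigenvalue $\lambda$, say $(A-\lambda I)^{k}e=0$, then $(B-\lambda I)^{k}(Me)=M(A-\lambda I)^{k}e=0$, so $Me$ is a generalized eigenvector of $B$ for the same eigenvalue $\lambda$. Applying this to the given basis $\{e_i\}_{1\le i\le n}$ of generalized eigenvectors of $A$, each $b_i=ABAe_i=Me_i$ is a (nonzero) generalized eigenvector of $B$, and since $M$ is invertible it carries the basis $\{e_i\}$ to a basis $\{b_i\}$. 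This would complete the proof; I would also remark that the argument makes no semisimplicity assumption on either $A$ or $B$, since it is phrased entirely in terms of generalized eigenspaces.
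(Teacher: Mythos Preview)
Your argument is correct and is exactly the approach the paper takes: the paper's proof is the single sentence that $B$ is obtained by conjugating $A$ by $ABA$, and you have simply written out the details of that conjugation and its consequence for generalized eigenvectors.
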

\begin{proof}
This follows from the fact that $B$ is obtained from conjugating $A$ by $ABA$.
\end{proof}

\section{Strictly Indecomposable Representations}
From now on, we assume $(V,\rho)$ is a strictly indecomposable  $n$-dimensional $B_3$ representation over an algebraically closed field $K$ of characteristic $0$ and $W$ is an invariant space for $A$ and $B$ of minimal dimension. 
\begin{lemma}\label{lem:basis} 
Suppose  $\{e_k\}_{1\leq k\leq n}$ is a basis of generalized eigenvectors of $A$. If $W=span\{e_i\}_{i\in I}$ for some $I\subset \{1,\ldots,n\}$ is a invariant subspace for $A$ and $B$, then we also have $W=span\{b_i:=ABAe_i\}_{i\in I}$.
\end{lemma}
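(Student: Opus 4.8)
The plan is to exploit a single structural fact: the subspace $W$, being invariant under both $A=\rho(\sigma_1)$ and $B=\rho(\sigma_2)$, is invariant under every element of $\rho(B_3)$, since $B_3=\langle\sigma_1,\sigma_2\rangle$. Applying this to the word $M:=\rho(\sigma_1\sigma_2\sigma_1)=ABA$ gives $M(W)\subseteq W$. Because $M\in GL(V)$ it is injective, so $\dim M(W)=\dim W$, and hence $M(W)=W$. (Equivalently, $W$ is finite-dimensional and invariant under the invertible operators $A,B$, so it is invariant under $A^{-1},B^{-1}$, hence under $M^{-1}=A^{-1}B^{-1}A^{-1}$, which again forces $M(W)=W$.)

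The remainder is just transporting the spanning set through the linear map $M$. Since $W=span\{e_i\}_{i\in I}$, linearity of $M$ gives $M(W)=span\{Me_i\}_{i\in I}=span\{b_i\}_{i\in I}$, using the defining equality $b_i=ABAe_i=Me_i$. Combining this with $M(W)=W$ from the first step yields $W=span\{b_i\}_{i\in I}$, which is the assertion. If one wants the $\{b_i\}_{i\in I}$ recognized as a genuine basis of $W$ and not merely a spanning set, invoke Lemma \ref{lem:bi-ei}: the full family $\{b_i\}_{1\le i\le n}$ is a basis of $V$ (indeed of generalized eigenvectors of $B$), so its subfamily indexed by $I$ is linearly independent; since it spans the $|I|$-dimensional space $W$, it is a basis of $W$.

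I do not anticipate a real obstacle here — the proof is short. The one point deserving a moment of care is the step from $M(W)\subseteq W$ to $M(W)=W$, i.e.\ that an $M$-invariant finite-dimensional subspace of an invertible operator is mapped \emph{onto} itself; this is what upgrades the conclusion to an equality of subspaces rather than a mere inclusion. Notably, the braid relation and the conjugation identity $B=M^{-1}AM$ underlying Lemma \ref{lem:bi-ei} are not needed for this particular statement; only invertibility of $A$ and $B$ and joint invariance of $W$ are used.
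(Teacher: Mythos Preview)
Your proof is correct and follows essentially the same route as the paper: the authors likewise observe that $W$ is invariant under $ABA$ and that invertibility of $ABA$ forces $ABA(W)=W$, then cite Lemma~\ref{lem:bi-ei} to conclude that the $b_i$ for $i\in I$ form a basis of $W$. Your write-up is simply more explicit about the intermediate step $M(W)=\mathrm{span}\{Me_i\}_{i\in I}$ and about why the spanning set is in fact a basis.
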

\begin{proof}
Since $W$ is also invariant under $ABA$, and $ABA$ is invertible, we have $ABA(W)=W$. Thus by Lemma \ref{lem:bi-ei}, we see $b_i\in W$ for all $i\in I$ and they form a basis of $W$.
\end{proof}

\begin{theorem}\label{thm:n=2}
If $\dim(V)=2$ then any strictly indecomposable representation is equivalent to one of the following:
\begin{enumerate}[label=\upshape(\arabic*)%,ref=\theThm (\arabic*)]
]
    \item \label{n=2_first}
$A=\begin{pmatrix}
\lambda_1&0\\
0&\lambda_2
\end{pmatrix}\ \ \ \ \ 
B=\begin{pmatrix}
\lambda_1&1\\
0&\lambda_2
\end{pmatrix},\ \ \ \ \ \lambda_2=\lambda_1 e^{\pm\pi i/3}$

    \item \label{n=2_second}
$A=B=\begin{pmatrix}
\lambda&1\\
0&\lambda 
\end{pmatrix}.
$
\end{enumerate}

\end{theorem}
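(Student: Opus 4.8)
The plan is to set up coordinates using the reduction that the statement already permits: since $(V,\rho)$ is strictly indecomposable of dimension $2$, it has a proper nonzero invariant subspace $W$, necessarily of dimension $1$, which is thus simultaneously an eigenline for $A$ and for $B$. First I would put $A$ in Jordan form. If $A$ were a single Jordan block $\begin{pmatrix}\lambda&1\\0&\lambda\end{pmatrix}$, then its unique eigenline is $\mathrm{span}\{e_1\}$, so $W=\mathrm{span}\{e_1\}$ is forced, and by Lemma~\ref{lem:basis} it is also $\mathrm{span}\{b_1\}$ with $b_1=ABAe_1$; I would then write $B$ as an upper-triangular matrix $\begin{pmatrix}\mu&\beta\\0&\mu'\end{pmatrix}$ preserving $\mathrm{span}\{e_1\}$, impose the braid relation $ABA=BAB$ entrywise, and check that the only solutions are conjugate (by a diagonal matrix) to case~\ref{n=2_second}, where indeed $A=B$ forces $ABA=A^3=BAB$ trivially; one must also confirm this representation is genuinely indecomposable (the only invariant line is $\mathrm{span}\{e_1\}$, so it cannot split) and strictly reducible (that line is proper), and that rescaling lets us normalize the off-diagonal entry to $1$ and reduces the two-parameter family to one representative up to equivalence — actually $\lambda$ remains a free parameter, so case~\ref{n=2_second} is a one-parameter family.

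In the remaining case $A=\mathrm{diag}(\lambda_1,\lambda_2)$ is diagonalizable. If $\lambda_1=\lambda_2$ then $A$ is scalar; I would argue this cannot give a strictly indecomposable representation, because then the braid relation forces $B^2$ to be scalar hence (char $0$, $K$ algebraically closed) $B$ is diagonalizable, and a pair of diagonalizable commuting-up-to-nothing... more carefully: with $A$ scalar the braid relation $ABA=BAB$ reads $A^2B=AB^2$, i.e. $AB=B^2$, so $B=A\neq$ anything new — in any event $B$ has two eigenlines and the representation decomposes, contradiction, so $\lambda_1\neq\lambda_2$. Now $A$ has exactly two eigenlines, $\mathrm{span}\{e_1\}$ and $\mathrm{span}\{e_2\}$; $W$ must be one of them, say $\mathrm{span}\{e_1\}$ after possibly swapping, and by Lemma~\ref{lem:basis} the $B$-eigenvector $b_1=ABAe_1$ also spans $W$. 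Write $B=\begin{pmatrix}a&b\\c&d\end{pmatrix}$; the condition that $\mathrm{span}\{e_1\}$ be $B$-invariant gives $c=0$, and indecomposability forces $b\neq 0$ (else $B$ is diagonal and shares the full eigenbasis with $A$, splitting $V$).

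Then I would substitute $A=\mathrm{diag}(\lambda_1,\lambda_2)$, $B=\begin{pmatrix}a&b\\0&d\end{pmatrix}$ with $b\neq0$ into $ABA=BAB$ and read off the resulting polynomial equations in $a,b,d,\lambda_1,\lambda_2$. Comparing diagonal entries should force $a,d$ to be determined by $\lambda_1,\lambda_2$ — plausibly $a=\lambda_1$, $d=\lambda_2$, so that $A$ and $B$ have the same eigenvalues, which matches the shape of case~\ref{n=2_first} — and the off-diagonal entry comparison should yield a single scalar constraint. The crucial constraint is the one forcing a relation among the eigenvalues: I expect it to be exactly $\lambda_1^2-\lambda_1\lambda_2+\lambda_2^2=0$, equivalently $\lambda_2/\lambda_1=e^{\pm\pi i/3}$ (the primitive sixth roots of unity), which is precisely the equation appearing in Theorem~\ref{TWthm23} as the obstruction to irreducibility in dimension $2$ — reassuringly consistent, since our representations are reducible. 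Having $b\neq 0$ arbitrary, I would rescale by $\mathrm{diag}(1,b)$ (which commutes with $A$) to normalize $b=1$, landing on case~\ref{n=2_first}. Finally I would verify genuine indecomposability (the two $A$-eigenlines are $\mathrm{span}\{e_1\}$ and $\mathrm{span}\{e_2\}$; only $\mathrm{span}\{e_1\}$ is $B$-invariant since $b\neq0$, so $V$ has a unique proper invariant subspace and cannot decompose) and that the two sign choices $e^{+\pi i/3}$ and $e^{-\pi i/3}$ (and the swap $\lambda_1\leftrightarrow\lambda_2$) give all inequivalent possibilities, being distinguished by the (ordered, or unordered with the invariant-line marking) eigenvalue data of $A$.

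The main obstacle I anticipate is purely bookkeeping: the braid equation $ABA=BAB$ is four scalar equations, and one must solve them cleanly enough to see that $a=\lambda_1$, $d=\lambda_2$ and $\lambda_1^2-\lambda_1\lambda_2+\lambda_2^2=0$ are the only outcomes, with no spurious extra branches (e.g. a branch where $b=0$ sneaks back in, or where $A$ degenerates). Keeping track of which solutions are conjugate — in particular handling the residual diagonal-conjugation freedom and the $e_1\leftrightarrow e_2$ swap — is where care is needed to state the classification \emph{up to equivalence} without redundancy or omission.
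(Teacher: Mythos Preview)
Your proposal is correct and follows essentially the same approach as the paper: put $A$ in Jordan form, use the invariant line to force upper-triangularity, solve the braid relation entrywise, normalize via conjugation by a diagonal matrix, and verify indecomposability by checking that only one eigenline is common to $A$ and $B$. The only cosmetic difference is that the paper parameterizes $ABA$ first (via Lemma~\ref{lem:basis} and the $b_i=ABAe_i$ basis) and then recovers $B$, whereas you parameterize $B$ directly; for $n=2$ this is immaterial, and your explicit treatment of the scalar-$A$ subcase is a small bonus the paper leaves implicit.
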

\begin{remark}
    Note that by Theorem \ref{TWthm23} there are no irreducible 2-dimensional representations with eigenvalues as in (1).  While the eigenvalues of (2) can appear in an irreducible representation, this representation is clearly reducible.
\end{remark}

\begin{proof}

\iffalse Let $(V,\rho)$ be a 2-dimensional strictly indecomposable $B_3$ representation and set $\rho(\sigma_1)=A,\rho(\sigma_2)=B$.  Let $M\in \operatorname{GL}(2,K)$ be such that $M^{-1}AM=A_0$, Where $A_0$ the Jordan normal form of $A$ and let $B_0=M^{-1}BM$. Clearly $A_0$ and $B_0$ satisfy the braid relation so
\fi
In this case the minimal invariant subspace $W$ has $\dim(W)=1$, so that $A$ and $B$ both act on $W$ by the same scalar $\lambda$. We may assume that $A$ is in Jordan form which gives us two cases:
\begin{enumerate}[label=\upshape \textbf{Case\ \arabic*}, itemindent=3.5em, leftmargin=0pt]
 \item \label{case:n=2_first}: If $A=\begin{pmatrix}
\lambda_1&0\\
0&\lambda_2
\end{pmatrix}$ then the standard basis vectors $e_1,e_2$ are eigenvectors, so any $1$-dimensional $A,B$ invariant subspace is spanned by either $e_1$ or $e_2$. Thus, by Lemma \ref{lem:basis}, $W=span\{e_i\}=span\{b_i\},$ for either $ i=1$  or $i=2$. Without loss of generality we may assume $W=span\{e_1,b_1\}$.  Thus, since $b_1=ABAe_1$ we have that $b_1=\lambda_1^3e_1$.  Set $b_2=\gamma_1 e_1+\gamma_2 e_2$.  By indecomposability, $\gamma_1\neq 0$, also $\gamma_2\neq 0$, by Lemma \ref{lem:basis}. Then by Lemma \ref{lem:bi-ei}, $ABA=\begin{pmatrix}
\lambda_1^3&\gamma_1\\
0&\gamma_2
\end{pmatrix}
$. 
Let $M=\begin{pmatrix}
\gamma_1&0\\
0&1
\end{pmatrix}
$. Since $A$ commutes with $M$, by conjugation with $M$ we get an equivalent representation with $ABA=\begin{pmatrix}
\lambda_1^3&1\\
0&\gamma_2
\end{pmatrix}$. Then by computation, we get $B$ by solving the equation $ABA=BAB$ with three unknown variables: $\gamma_2,\lambda_1,\lambda_2$.  We find $\gamma_2=\lambda_2^3$ where  $\lambda_2^2-\lambda_1\lambda_2+\lambda_1^2=0$, and conclude $B=\begin{pmatrix}
\lambda_1&\frac{1}{\lambda_1\lambda_2}\\
0&\lambda_2
\end{pmatrix}$. Then conjugating by $\begin{pmatrix}
\lambda_1\lambda_2&0\\
0&1
\end{pmatrix}$, we obtain the equivalence class represented by \ref{n=2_first}.

\item \label{case:n=2_second}: 
$A=\begin{pmatrix}
\lambda&1\\
0&\lambda
\end{pmatrix}$, then by similar argument, we get  $ABA=\begin{pmatrix}
\lambda^3&\gamma_1\\
0&\gamma_2
\end{pmatrix}$.  In this case we cannot conjugate any variables away, and we solve the braid equation with three unknown variables  $\gamma_1,\gamma_2,\lambda$ directly, obtaining the equivalence class represented by \ref{n=2_second}.
\end{enumerate} 

Clearly these classes are distinct since the Jordan normal forms of $A$ are different in these cases. They are also indecomposable, since for \ref{n=2_first} the only way to write $V$ as a direct sum of proper invariant subspaces of $A$ is $Ke_1\oplus Ke_2$, but $Ke_2$ is not invariant for $B$. It is also obvious for \ref{n=2_second}.

\end{proof}

Next we aim to classify 3-dimensional strictly indecomposable representations of $B_3$. The strategy is similar to that of Theorem \ref{thm:n=2}, which is outlined as follows:
\begin{enumerate}[label=\upshape \textbf{Step\ \arabic*}, itemindent=3.5em, leftmargin=0pt]

\item\label{step1} Reduce $A$ to the Jordan normal form.

\item\label{step2} Find the possible basis for $W$ and write down $ABA$ with some extra unknown variables with certain conditions.

\item\label{step3} Compute $B$ (which is easy since $A^{-1}$ has a very simple form) and by using Maple \cite{Maple} (see Appendix) we can solve the equation: $ABA=BAB$, which ensures that we have a $B_3$ representation, although they may fail to be strictly indecomposable.

\item\label{step4} Verify indecomposability and deduce inequivalence: First determine all the forms of matrices that commute with $A$ (when $A$'s characteristic polynomial equals its minimal polynomial, such matrices are polynomials of $A$) and conjugate $B$ by them.  Thus we can simplify these solutions to guarantee distinctness.  Finally we can deduce whether the representation is indecomposable or not by checking finite possibilities of direct sum decomposition  (see Appendix).  
\end{enumerate}

\begin{theorem}\label{thm:n=3_W=1}
when $n=3,\  \dim(W)=1$, every strictly indecomposable representation has one of the following forms (up to equivalence), where $\lambda_i$ are mutually different and non-zero:
\begin{enumerate}[label=\upshape(\arabic*)%,ref=\theThm (\arabic*)]
]
\item 
$\ A=\begin{pmatrix}
\lambda_1&0&0\\
0&\lambda_2&0\\
0&0&\lambda_3
\end{pmatrix}$

\begin{enumerate}
[label=\upshape(1.\arabic*)%,ref=\theThm (\arabic*)]
]
\item \label{n=3_W=1_A1B1}
$B=\begin{pmatrix}
\lambda_1&1&1\\
0&\lambda_2&0\\
0&0&\lambda_3
\end{pmatrix}$\ \ \ \ \ \  ($\lambda_2=\lambda_1 e^{\pm\pi i/3},\ \lambda_3=\lambda_1 e^{\mp\pi i/3})
$
\item \label{n=3_W=1_A1B2}
$B=\begin{pmatrix}
\lambda_1&0&1\\
0&\lambda_2&1\\
0&0&\lambda_3
\end{pmatrix}$\ \ \ \ \ \  ($\lambda_1=\lambda_3 e^{\pm\pi i/3},\ \lambda_2=\lambda_3 e^{\mp\pi i/3} )
$
\item \label{n=3_W=1_A1B3}
$B=\begin{pmatrix}
\lambda_1&1&1\\
0&\lambda_2&2\lambda_2\\
0&0&\lambda_3
\end{pmatrix}$\ \ \ \ \ ($\lambda_1=\lambda_2 e^{\pm\pi i/3},\ \lambda_3=\lambda_2 e^{\mp\pi i/3}) 
$
\item \label{n=3_W=1_A1B4}  
$B=\begin{pmatrix}
\lambda_1&1&\frac{-\lambda_2^2(\lambda_1^2-\lambda_1\lambda_2+\lambda_2^2)}{\lambda_1(\lambda_1^2+\lambda_2^2)}\\
0&\frac{-\lambda_1^4}{\lambda_2(\lambda_1^2+\lambda_2^2)}&\frac{\lambda_1^2(\lambda_1^4+\lambda_1^2\lambda_2^2+\lambda_2^4)}{(\lambda_1^2+\lambda_2^2)^2}\\
0&1&\frac{\lambda_2^3}{\lambda_1^2+\lambda_2^2}
\end{pmatrix}$\ \ \ \  ($\lambda_3=-\frac{\lambda_1^2}{\lambda_2}$)\\\\\\
\end{enumerate}

\item \label{n=3_W=1_A2B}
$\ A=\begin{pmatrix}
\lambda_1&0&0\\
0&\lambda_1&0\\
0&0&\lambda_2
\end{pmatrix}$\ \ \ \ \  
$B=\begin{pmatrix}
\lambda_1&1&\frac{\lambda_1\lambda_2}{\lambda_1-\lambda_2}\\
0&\frac{\lambda_2^2}{\lambda_2-\lambda_1}&0\\
0&1&\frac{\lambda_1^2}{\lambda_1-\lambda_2}
\end{pmatrix}$
\ \ \ \ ($\lambda_1=\lambda_2 e^{\pm\pi i/3}$)

\item 
$\ A=\begin{pmatrix}
\lambda_1&0&0\\
0&\lambda_2&1\\
0&0&\lambda_2
\end{pmatrix}$

\begin{enumerate}[label=\upshape(3.\arabic*)%,ref=\theThm (\arabic*)]
]
\item \label{n=3_W=1_A3B1}
$B=\begin{pmatrix}
\lambda_1&0&1\\
0&\lambda_2&1\\
0&0&\lambda_2
\end{pmatrix}$
\ \ \ \ ($\lambda_1=\lambda_2e^{\pm\pi i/3}$)

\item \label{n=3_W=1_A3B2}
$B=\begin{pmatrix}
\lambda_1&1&\frac{\lambda_1+\lambda_2}{\lambda_1^2}\\
0&\lambda_2&0\\
0&\lambda_1^2&\lambda_2
\end{pmatrix} $\ \ \ \ ($\lambda_2=\lambda_1 e^{\pm\pi i/2}$)
\end{enumerate}

\item $\ A=\begin{pmatrix}
\lambda_1&1&0\\
0&\lambda_1&0\\
0&0&\lambda_2
\end{pmatrix}$

\begin{enumerate}[label=\upshape(4.\arabic*)%,ref=\theThm (\arabic*)]
]
\item \label{n=3_W=1_A4B1}
$B=\begin{pmatrix}
\lambda_1&1+\frac{\lambda_1\beta}{\lambda_2^2}&1\\
0&\lambda_1&0\\
0&\beta&\lambda_2
\end{pmatrix}$\ \ \ ($\lambda_1=\lambda_2 e^{\pm\pi i/3}$, $\beta$ an arbitrary parameter.)

\item \label{n=3_W=1_A4B2}
$B=\begin{pmatrix}
\lambda_1&-2&0\\
0&\frac{\lambda_2}{2}&1\\
0&\frac{3\lambda_2^2}{4}&-\frac{\lambda_2}{2}
\end{pmatrix} $\ \ \ \ ($\lambda_1=-\lambda_2$)
\end{enumerate}

\item \label{n=3_W=1_A5B}
$\ A=B=\begin{pmatrix}
\lambda&1&0\\
0&\lambda&1\\
0&0&\lambda
\end{pmatrix}$  
\end{enumerate}
\end{theorem}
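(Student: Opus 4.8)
The plan is to carry out the four-step program stated just above the theorem, organized by the Jordan type of $A=\rho(\sigma_1)$. Writing $J_k(\mu)$ for the $k\times k$ Jordan block with eigenvalue $\mu$, an invertible $3\times 3$ matrix over $K$ is conjugate to exactly one of: $J_1(\lambda_1)\oplus J_1(\lambda_2)\oplus J_1(\lambda_3)$ with the $\lambda_i$ distinct; $J_1(\lambda_1)^{\oplus 2}\oplus J_1(\lambda_2)$ with $\lambda_1\neq\lambda_2$; $J_1(\lambda_1)\oplus J_2(\lambda_2)$ with $\lambda_1\neq\lambda_2$; $J_2(\lambda)\oplus J_1(\lambda)$; $J_3(\lambda)$; or $\lambda I$. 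The scalar type is discarded immediately: $A=\lambda I$ and $ABA=BAB$ force $\lambda^2B=\lambda B^2$, hence $B=\lambda I$, and the representation splits into lines. For the type $J_2(\lambda)\oplus J_1(\lambda)$ one checks, by the same method used in the other cases, that every $B$ solving $ABA=BAB$ and admitting a common invariant line yields a decomposable representation, so this type contributes nothing here. What remains are the five shapes of $A$ appearing in items (1)--(5); note that the single Jordan type $J_1(\lambda_1)\oplus J_2(\lambda_2)$ produces both (3) and (4), according to whether the invariant line $W$ lies in the size-$1$ block or in the size-$2$ block.

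For a fixed surviving type I first determine $W$ and normalize. Since $\dim W=1$ we have $W=\mathrm{span}\{v\}$ with $v$ an eigenvector of $A$; applying $ABA=BAB$ to $v$ and comparing the scalars by which $A$ and $B$ act shows $Av=Bv=\lambda v$ for a single $\lambda\in K^\times$. Conjugating by the centralizer $\{M\in \mathrm{GL}(3,K):MA=AM\}$ and reordering Jordan blocks, I may take $v=e_1$, so $W=\mathrm{span}\{e_1\}$, the resulting Jordan form of $A$ — in particular which block contains $e_1$ — being what determines which item of the statement is in play. Then the first column of $B$ is $(\lambda,0,0)^{\mathsf T}$, and since $ABA$ also preserves $W$, Lemma \ref{lem:basis} gives that the first column of $\Delta:=ABA$ is $(\lambda^3,0,0)^{\mathsf T}$; thus $A$, $B$, $\Delta$ are block upper triangular for the flag $0\subset W\subset V$, and $B=\Delta A\Delta^{-1}$ is conjugate to $A$ (the braid relation gives $\rho(\sigma_1\sigma_2\sigma_1)\,\rho(\sigma_1)\,\rho(\sigma_1\sigma_2\sigma_1)^{-1}=\rho(\sigma_2)$, cf.\ Lemma \ref{lem:bi-ei}), which forces the spectrum of $B$ to coincide with that of $A$. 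I then write $\Delta$ with its remaining entries as indeterminates, subject to invertibility and the above constraints, set $B=A^{-1}\Delta A^{-1}$ (immediate since $A$ is in Jordan form), and solve the polynomial system equivalent to $\Delta=BAB$; this is the step where a computer algebra system is invoked (see the Appendix). For each type this returns a finite list of solution families — some carrying a genuine free parameter, such as $\beta$ in \ref{n=3_W=1_A4B1}, and some forcing the algebraic relations among the $\lambda_i$ recorded in the statement (the sixth-root-of-unity relations $\lambda_i^2-\lambda_i\lambda_j+\lambda_j^2=0$, as well as $\lambda_i^2+\lambda_j^2=0$, $\lambda_i=-\lambda_j$, and the like).

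Finally I clean up and verify. For each solution family I use the residual conjugation freedom — the subgroup of the centralizer of $A$ that also preserves $W$; when the minimal and characteristic polynomials of $A$ agree this centralizer equals $K[A]$, which makes the reduction mechanical — to scale away spurious parameters and arrive at the normal forms displayed. Each surviving representation is reducible by construction (it contains the invariant subspace $W$), so strict indecomposability reduces to checking that it is not a direct sum of proper subrepresentations; the common invariant subspaces are spanned by short lists of generalized eigenvectors of $A$, so this is a finite check carried out case by case (recorded in the Appendix), which at the same time discards the genuinely decomposable solutions and the ones that are the irreducible Tuba--Wenzl representations of Theorem \ref{TWthm23}. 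Inequivalence of the listed representations is then read off from invariants that conjugation cannot change: the Jordan type of $A$, the relations forced on the eigenvalues, and, within a single case, finer data such as the Jordan type of $B$ or the value of a parameter.

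The step I expect to be the main obstacle is the case analysis for the Jordan types of $A$ with a repeated eigenvalue — $J_1(\lambda_1)^{\oplus 2}\oplus J_1(\lambda_2)$, the two arrangements of $J_1(\lambda_1)\oplus J_2(\lambda_2)$, and the excluded $J_2(\lambda)\oplus J_1(\lambda)$. There the centralizer of $A$ is large, so there are several inequivalent choices of $W$ to follow; the braid system acquires more solution components; and a substantial fraction of those components are decomposable or reduce to the irreducible representations of Theorem \ref{TWthm23} and must be set aside. Isolating the few strictly indecomposable, pairwise inequivalent survivors and certifying that the list is exhaustive is where the real care lies. By contrast the diagonal-with-distinct-eigenvalues and $J_3(\lambda)$ cases are comparatively routine, essentially reproducing the two cases of Theorem \ref{thm:n=2} one dimension higher.
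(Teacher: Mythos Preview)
Your proposal is correct and follows essentially the same approach as the paper: split by Jordan type of $A$, normalize the invariant line, parametrize $ABA$, solve the braid relation with a computer algebra system, and then prune and normalize via the centralizer of $A$. The only noteworthy shortcut you do not mention is that for the type $J_1(\lambda_1)^{\oplus 2}\oplus J_1(\lambda_2)$ the paper observes a priori (by intersecting the $\lambda_1$-eigenspaces of $A$ and $B$) that a common invariant line always lies in the repeated eigenspace, which trims the subcase analysis you flag as the main obstacle.
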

\begin{remark}
    Notice that in all of the cases listed in (1)  as well as (3.2) and (4.2) no representation with these eigenvalues can be irreducible, as the polynomial $(\lambda_1^2+\lambda_2\lambda_3)(\lambda_2^2+\lambda_1\lambda_3)(\lambda_3^2+\lambda_1\lambda_2)$ vanishes.  Note also that in case (4.1) the equivalence class of representations is not determined up to finite choices by the eigenvalues--there is an additional parameter $\beta$.  
\end{remark}
\begin{proof}
The proof splits into cases depending on the  Jordan normal forms of $A$ (\ref{step1}).

\begin{enumerate}[label=\upshape \textbf{Case\ \arabic*}, itemindent=3.5em, leftmargin=0pt]
 \item \label{case:n=3_W=1_case1}:
 $A$ is a diagonal matrix with three distinct eigenvalues. By Lemma \ref{lem:basis}, we may assume $W=span\{e_1\}=span\{b_1\}$, then $ABA=\begin{pmatrix}
\lambda^3_1&\beta_1&\gamma_1\\
0&\beta_2&\gamma_2\\
0&\beta_3&\gamma_3
\end{pmatrix}$ with $\beta_1,\gamma_1$ not equal to 0 simultaneously (by indecomposability). Using Maple, we get \ref{n=3_W=1_A1B1}-\ref{n=3_W=1_A1B4}.

\item \label{case:n=3_W=1_case2}: 
$A$ is a diagonal matrix with two distinct eigenvalues. We may assume $W_1=span\{e_1,e_2\}$ is a two-dimensional eigenspace for $A$ with eigenvalue $\lambda_1$. Then by Lemma \ref{lem:bi-ei}, $W_2=span\{b_1,b_2\}$ is a two-dimensional eigenspace for $B$ with eigenvalue $\lambda_1$.  Since $V$ is 3-dimensional, we have $W_0=W_1\cap W_2\neq 0$. If $\dim(W_0)=2$, then $Be_i=\lambda_1 e_i$, which implies $B=\begin{pmatrix}
\lambda_1&0&\gamma_1\\
0&\lambda_1&\gamma_2\\
0&0&\lambda_2
\end{pmatrix}$ with $\gamma_1,\gamma_2\neq 0$. 
Then conjugating by $M=\begin{pmatrix}
\gamma_1&0&0\\
0&\gamma_2&0\\
0&0&1
\end{pmatrix}$ we can assume $\gamma_1,\gamma_2$ both to be $1$.   Solving the braid equation to get the relation for eigenvalues, we find the equivalence class is represented by
$B=\begin{pmatrix}
\lambda_1&0&1\\
0&\lambda_1&1\\
0&0&\lambda_2
\end{pmatrix}$,  However if we let 
$M_1=\begin{pmatrix}
1&1&0\\
2&1&0\\
0&0&1
\end{pmatrix}$, then we have $M^{-1}_1 AM_1=A,\  M^{-1}_1 BM_1=\begin{pmatrix}
\lambda_1&0&0\\
0&\lambda_2&1\\
0&0&\lambda_2
\end{pmatrix}$, from which we conclude representation is decomposable. 

If $\dim(W_0)=1$ then there are scalars $ t_1,t_2,k_1,k_2\in K$ such that $ t_1e_1+t_2e_2=k_1b_1+k_2b_2$.  Without loss of generality, we may assume $t_1\neq 0$, and change the basis via $(\widetilde{e_1},\widetilde{e_2})=(e_1,e_2)\begin{pmatrix}
t_1&0\\
t_2&1
\end{pmatrix}
$. Replacing $A$ and $B$  by their conjugates by $M=\begin{pmatrix}
t_1&0&0\\
t_2&1&0\\
0&0&1
\end{pmatrix}
$  $A$ is unchanged and $span\{\widetilde{e_1}\}$ is the common invariant space for  $A,B$.  Then by Lemma \ref{lem:basis}, we let $W=span\{\widetilde e_1\}=span\{\widetilde b_1\}$, we get $ABA=\begin{pmatrix}
\lambda_1&\beta_1&\gamma_1\\
0&\beta_2&\gamma_2\\
0&\beta_3&\gamma_3
\end{pmatrix}$ as usual and this time $\beta_3\neq 0$ ($\beta_3=0$ is the case that $\dim(W_0)=2$), and we can simplify the form by taking $\beta_3=1$, by conjugating by a diagonal matrix, we get \ref{n=3_W=1_A2B}.

\item \label{case:n=3_W=1_case3}:
$A$ has one two-dimensional Jordon block, and two distinct eigenvalues, without loss of generality, $\{e_2,e_3\}$ corresponds to eigenvalue $\lambda_2$ and $e_3$ is a generalized eigenvector. Here are two kinds of choices for $W$, one is
$W=span\{e_1\}=span\{b_1\}$, by same argument and computations we get \ref{n=3_W=1_A3B1}, \ref{n=3_W=1_A3B2}; the other choice is $W=span\{e_2\}=span\{b_2\}$, then we have \ref{n=3_W=1_A4B1}, \ref{n=3_W=1_A4B2}.

\item \label{case:n=3_W=1_case4}:
$A$ has one two dimensional Jordon block with only one eigenvalue $\lambda$, then without loss of generality, $W_1=span\{e_1,e_2\}$ is the two dimensional eigenspace for $A$ with respect to $\lambda$, $W_2=span\{b_1,b_2\}$ is the two dimensional eigenspace with respect to $\lambda$ for $B$, Let $W_0=W_1\cap W_2$, then by similar argument in \ref{case:n=3_W=1_case2}, when $\dim(W_0)=2$, we can reduce $B$ to the form $\begin{pmatrix}
\lambda&0&1\\
0&\lambda&\gamma\\
0&0&\lambda
\end{pmatrix}$, by easy computation, this case is impossible. When $\dim(W_0)=1$, we have a similar story with \ref{case:n=3_W=1_case2}, except the case when $t_1=0$; when $t_1\neq 0$, we follow the trick and reduce it to the case that $W=span\{e_1,b_1\}$, similarly we get a decomposable representation; when $t_1=0$, again by Lemma \ref{lem:basis}, we know $W=span\{e_2,b_2\}$, then we get $ABA=\begin{pmatrix}
\alpha_1&0&\gamma_1\\
\alpha_2&\lambda^3&\gamma_2\\
\alpha_3&0&\gamma_3
\end{pmatrix}$, by solving the equation we get $\gamma_1=\alpha_2=\alpha_3=0$, then $V=span\{e_1\}\oplus span\{e_2,e_3\}$, $A,B$ are both invariant on each summand, which means it is decomposable as well. 
\item \label{case:n=3_W=1_case5}:
$A$ is a 3-dimensional Jordan block, the only choice for $W$ is $span\{e_1\}=span\{b_1\}$, again by using Maple we get \ref{n=3_W=1_A5B}.
\end{enumerate}

When $A$ is a scalar matrix, by conjugating $M=ABA$, we get $B=A$, which is obviously decomposable. 

Finally, we can easily verify all the solutions we get are distinct and indeed indecomposable but not irreducible (see Appendix), which finishes the proof. 
\end{proof}

\begin{theorem}\label{thm:n=3_W=2}
when $n=3,\  \dim(W)=2$, we have following results: ($\lambda_i\neq 0$ are distinct)

\begin{enumerate}[label=\upshape(\arabic*)%,ref=\theThm (\arabic*)]
]
\item\label{n=3_W=2_A1B1}
$\ A=\begin{pmatrix}
\lambda_1&0&0\\
0&\lambda_2&0\\
0&0&\lambda_3
\end{pmatrix}$  \ \ \ \ \  
$\ B=\begin{pmatrix}
\frac{\lambda_2^3}{\lambda_2^2+\lambda_3^2}&1&1\\
\frac{\lambda_3^2(\lambda_2^4+\lambda_2^2\lambda_3^2+\lambda_3^4)}{(\lambda_2^2+\lambda_3^2)^2}&-\frac{\lambda_3^4}{\lambda_2(\lambda_2^2+\lambda_3^2)}&-\frac{\lambda_3^3(\lambda_2^2+\lambda_2\lambda_3+\lambda_3^2)}{\lambda_2^2(\lambda_2^2+\lambda_3^2)}\\
0&0&\lambda_3
\end{pmatrix}$\\ 
($\lambda_1=-\frac{\lambda_3^2}{\lambda_2} \ \ \text{and}\ \ \lambda_1^2-\lambda_1\lambda_2+\lambda_2^2\neq 0$)

Or equivalently  
$\ A=\begin{pmatrix}
\lambda_2&\lambda_2&0\\
0&\lambda_1&0\\
0&0&\lambda_3
\end{pmatrix}$  \ \ \ \ \  
$\ B=\begin{pmatrix}
\lambda_1&0&\frac{\lambda_3^2-(\lambda_1-\lambda_2)\lambda_3+\lambda_2^2}{\lambda_2^2+\lambda_3^2}\\
-\lambda_1&\lambda_2&-1\\
0&0&\lambda_3
\end{pmatrix}$

\item\label{n=3_W=2_A2B2}
$\ A=\begin{pmatrix}
\lambda_1&1&0\\
0&\lambda_1&0\\
0&0&\lambda_2
\end{pmatrix}$  \ \ \ \ \  
$\ B=\begin{pmatrix}
2\lambda_1&1&\frac{2\lambda_1+\lambda_2}{\lambda_2^2}\\
\lambda_2^2&0&1\\
0&0&\lambda_2
\end{pmatrix}$
\ \ \ ($\lambda_1=\lambda_2 e^{\pm\pi i/2}$)

Or equivalently
$\ A=\begin{pmatrix}
\lambda_1&\lambda_1&0\\
0&\lambda_1&0\\
0&0&\lambda_2
\end{pmatrix}$  \ \ \ \ \  
$\ B=\begin{pmatrix}
\lambda_1&0&1+\frac{\lambda_1}{\lambda_2}\\
-\lambda_1&\lambda_1&\frac{\lambda_2}{\lambda_1}\\
0&0&\lambda_2
\end{pmatrix}$

\item\label{n=3_W=2_A3B3}
$\ A=\begin{pmatrix}
\lambda_1&0&0\\
0&\lambda_2&1\\
0&0&\lambda_2
\end{pmatrix}$  \ \ \ \ \  
$\ B=\begin{pmatrix}
\frac{\lambda_2}{2}&1&0\\
\frac{3\lambda_2^2}{4}&-\frac{\lambda_2}{2}&-2\\
0&0&\lambda_2
\end{pmatrix}$ \ \ \ \ \ ($\lambda_1=-\lambda_2$)

Or equivalently
$\ A=\begin{pmatrix}
\lambda_1&\lambda_1&1\\
0&\lambda_2&-2\\
0&0&\lambda_2
\end{pmatrix}$  \ \ \ \ \  
$\ B=\begin{pmatrix}
\lambda_2&0&-2\\
-\lambda_2&\lambda_1&4\\
0&0&\lambda_2
\end{pmatrix}$
\end{enumerate}
\begin{remark}
    Notice that in all cases the polynomial \[(\lambda_1^2+\lambda_2\lambda_3)(\lambda_2^2+\lambda_1\lambda_3)(\lambda_3^2+\lambda_1\lambda_2)\] vanishes so that no 3-dimensional $B_3$ representation with these eigenvalues can be irreducible.
\end{remark}
\end{theorem}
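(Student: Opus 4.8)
The plan is to leverage the minimality of $W$ together with Theorem~\ref{TWthm23}. Since $\dim(W)=2$ is the least dimension of a nonzero invariant subspace, $V$ has no $1$-dimensional common invariant subspace, so $\rho|_W$ is irreducible. By Theorem~\ref{TWthm23} we may fix a basis $e_1,e_2$ of $W$ in which $A|_W$ and $B|_W$ are the standard $2$-dimensional matrices $A_W,B_W$ with parameters $\mu_1,\mu_2\in K^\times$ obeying $\mu_1^2-\mu_1\mu_2+\mu_2^2\neq0$ (this inequality being precisely the irreducibility criterion for $\rho|_W$). Extending to a basis $e_1,e_2,e_3$ of $V$, the quotient $V/W$ is a $1$-dimensional $B_3$-representation, and on a $1$-dimensional space the braid relation forces $\sigma_1,\sigma_2$ to act by a common scalar $\mu_3\in K^\times$ (if they act by $\alpha,\beta$ then $\alpha^2\beta=\beta^2\alpha$ with $\alpha,\beta\neq0$, so $\alpha=\beta$). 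Hence
\[
A=\begin{pmatrix}A_W & a\\ 0 & \mu_3\end{pmatrix},\qquad B=\begin{pmatrix}B_W & b\\ 0 & \mu_3\end{pmatrix},\qquad a,b\in K^2.
\]

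The next step is to reduce the braid relation. As $A_W B_W A_W=B_W A_W B_W$ already holds, comparing the upper-right blocks of $ABA$ and $BAB$ shows $ABA=BAB$ is equivalent to the single vector equation $Pa=Qb$, where $P:=A_W B_W-\mu_3 B_W+\mu_3^2 I$ and $Q:=B_W A_W-\mu_3 A_W+\mu_3^2 I$. An equivalence of $(V,\rho)$ must preserve $W$ (the only $2$-dimensional invariant subspace, since there is no $1$-dimensional one), hence is block upper triangular; its $W$-block commutes with $A_W,B_W$, so by Schur's lemma it is a scalar, and after rescaling the residual equivalences are conjugations by $\left(\begin{smallmatrix}cI & v\\ 0 & 1\end{smallmatrix}\right)$, $c\in K^\times$, $v\in K^2$, acting by $a\mapsto ca+(\mu_3 I-A_W)v$ and $b\mapsto cb+(\mu_3 I-B_W)v$. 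The pairs $\bigl((\mu_3 I-A_W)v,(\mu_3 I-B_W)v\bigr)$ are exactly the data of conjugates of the split extension $W\oplus K_{\mu_3}$, so $(V,\rho)$ is decomposable iff $(a,b)$ is such a ``coboundary'' and strictly indecomposable otherwise. The coboundary locus is the image of $v\mapsto\bigl((\mu_3 I-A_W)v,(\mu_3 I-B_W)v\bigr)$, which is injective since a common eigenvector of $A_W,B_W$ would violate the irreducibility of $\rho|_W$; so this locus always has dimension $2$. As it lies inside the solution space of $Pa=Qb$, a strictly indecomposable representation exists only when that solution space has dimension at least $3$, i.e.\ when the $2\times4$ matrix $\begin{pmatrix}P & -Q\end{pmatrix}$ has rank at most $1$.

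The remaining analysis is organized by the coincidences among $\mu_1,\mu_2,\mu_3$, equivalently by the Jordan type of $A$ on $V$. When $\mu_3\notin\{\mu_1,\mu_2\}$ the rank-$\le 1$ condition is a polynomial relation on $(\mu_1,\mu_2,\mu_3)$ whose solutions split according to whether $\mu_1\neq\mu_2$ (then $A$ is diagonal with three distinct eigenvalues---case~(1), with the relation $\lambda_1=-\lambda_3^2/\lambda_2$) or $\mu_1=\mu_2$ (then $A\sim J_2(\lambda_1)\oplus(\lambda_2)$---case~(2), with $\lambda_1^2+\lambda_2^2=0$). When $\mu_3\in\{\mu_1,\mu_2\}$ with $\mu_1\neq\mu_2$---after relabeling, $\mu_3=\mu_2$---the coboundaries no longer fill the $a$-direction transverse to the $\mu_2$-eigenline, so a non-coboundary solution has nonzero component of $a$ along that eigendirection and $A\sim(\lambda_1)\oplus J_2(\lambda_2)$---case~(3), with $\lambda_1=-\lambda_2$. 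One verifies separately that $\mu_1=\mu_2=\mu_3$, and the degenerate subcases in which the extra solution has vanishing component along a repeated eigendirection, yield only coboundaries. In each surviving case $Pa=Qb$ is a small linear system over $K(\mu_1,\mu_2,\mu_3)$; solving it and normalizing $(a,b)$ by the $(c,v)$-action---this is the Maple computation, see the Appendix---produces the matrices in the statement, and passing from the Tuba--Wenzl basis of $W$ to a Jordan basis of $A$ yields the ``or equivalently'' presentations.

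Finally one checks that the listed representations are pairwise inequivalent (distinguished by the Jordan form of $A$ together with the eigenvalue relations) and that each is genuinely strictly indecomposable: reducible since $W$ is a proper nonzero invariant subspace, indecomposable by the coboundary criterion above, and disjoint from Theorem~\ref{thm:n=3_W=1}, where the minimal invariant subspace is $1$-dimensional. I expect the main obstacle to be the bookkeeping around the rank-$\le 1$ locus: confirming that the coincidence patterns of the $\mu_i$ are exhaustive, that the rank condition translates exactly into the eigenvalue relations listed, and that the $(c,v)$-normalization removes all remaining freedom so that no two listed forms coincide---conceptually routine, which is exactly why the explicit linear algebra is delegated to Maple.
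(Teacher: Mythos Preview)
Your approach is correct and genuinely different from the paper's. The paper puts $A$ in Jordan normal form first, observes that three of the five Jordan types (those handled in Cases~2,~4,~5 of Theorem~\ref{thm:n=3_W=1}) already force a $1$-dimensional invariant subspace and hence cannot occur, and then for each of the two surviving types determines the admissible $2$-dimensional $W$, parametrizes $ABA$ using Lemma~\ref{lem:basis}, and feeds $ABA=BAB$ to Maple. You instead exploit the irreducibility of $\rho|_W$ to invoke Theorem~\ref{TWthm23} up front, obtain the block-triangular shape $\bigl(\begin{smallmatrix}A_W&a\\0&\mu_3\end{smallmatrix}\bigr),\bigl(\begin{smallmatrix}B_W&b\\0&\mu_3\end{smallmatrix}\bigr)$, reduce the braid relation to a linear system $Pa=Qb$, and interpret indecomposability cohomologically as $(a,b)$ not being a coboundary. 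Your rank-$\le 1$ criterion is the exact counterpart of the paper's Jordan-type elimination. The payoff of your route is a uniform existence criterion and a natural explanation for the ``or equivalently'' Tuba--Wenzl presentations, which in the paper appear as alternative normal forms found after the fact; the paper's route is more elementary (no Schur, no extension language) and organizes the output directly by Jordan type, which is how the theorem is stated.

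One point in your case~(3) argument deserves tightening. The sentence about coboundaries not filling the direction transverse to the $\mu_2$-eigenline does not by itself force $A$ to acquire a Jordan block: a non-coboundary $(a,b)$ could in principle still have $a$ lying in that eigenline, with $b$ carrying the obstruction. The clean way to see that $A$ cannot be diagonalizable when $\mu_3\in\{\mu_1,\mu_2\}$ and $\mu_1\neq\mu_2$ is to use that $B=(ABA)^{-1}A(ABA)$ is conjugate to $A$; if both were diagonalizable with a repeated eigenvalue, their $2$-dimensional eigenspaces for that eigenvalue would intersect nontrivially in $V$, producing a common eigenvector and hence a $1$-dimensional invariant subspace, contradicting $\dim(W)=2$. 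With this fix your case split is complete and matches the paper's two remaining Jordan types (diagonal with three eigenvalues; one $2\times2$ block with two eigenvalues, split into the two choices of $W$), and both approaches hand the residual linear algebra to Maple.
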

\begin{proof}
As usual, we first reduce $A$ to the Jordan normal form.  Observe that in each of the cases \ref{case:n=3_W=1_case2}, \ref{case:n=3_W=1_case4} and \ref{case:n=3_W=1_case5} there is a 1-dimensional invariant subspace, contrary to the assumption $\dim(W)=2$. Therefore it suffices to consider the following two cases:
\begin{enumerate}[label=\upshape \textbf{Case\ \arabic*}, itemindent=3.5em, leftmargin=0pt]
 \item \label{case:n=3_W=2_case1}:
$A$ is a diagonal matrix with three different eigenvalues.  In this case the 2-dimensional invariant space for $A$ can only be $span\{e_i,e_j\}\;i,j\in\{1,2,3\}$.  Without loss of generality, we may assume $W=span\{e_1,e_2\}$ is the common invariant subspace of $A,B$.  By Lemma \ref{lem:basis}, $W=span\{b_1,b_2\}$, then we get $ABA=\begin{pmatrix}
\alpha_1&\beta_1&\gamma_1\\
\alpha_2&\beta_2&\gamma_2\\
0&0&\gamma_3
\end{pmatrix}$, with $\alpha_i,\beta_i\neq 0, i\in \{1,2\}$, otherwise $\dim(W)=1$, and $\gamma_3\neq 0$,  by similar computation we get \ref{n=3_W=2_A1B1}.
 \item \label{case:n=3_W=2_case2}: $A$ has one two-dimensional Jordon block, and two distinct eigenvalues, then the 2-dimensional invariant space for $A$ can only be the generalized eigenspace or the direct sum of two 1-dimensional eigenspace, without loss of generality, we let $span\{e_1,e_2\}$ be the generalized eigenspace, then for one possibility, $W=span\{e_1,e_2\}$, and $W=span\{b_1,b_2\}$ (Lemma \ref{lem:basis}), therefore we get the same form of $ABA$ as in \ref{case:n=3_W=1_case1}, except that $\alpha_1$ or $\beta_1$ could be 0 this time, then by computation, we get \ref{n=3_W=2_A2B2}. For the other possibility of $W$, let $W=span\{e_1,e_3\}$, thus $W=span\{b_1,b_3\}$, by suitably adjusting the basis, we obtain the same form of $ABA$ as in \ref{case:n=3_W=2_case1}, finally we get \ref{n=3_W=2_A3B3}.
\end{enumerate}

It's easy to verify the solutions are indecomposable and distinct  since $A$ satisfies that its characteristic polynomial and minimal polynomial are equal. 
\end{proof} 

\section{Applications and Future Work}

Analyzing the braid group representations associated with non-semisimple braided categories is an immediate application to our classification.  Another key area in which indecomposable braid group representations appear is in the study of Yang-Baxter operators.  

\textbf{Application.}
Consider the following two Yang-Baxter matrices, which are homogeneous versions of the $R$-matrices associated with the quantum groups $U_q\mathfrak{sl}_2$ and $U_q\mathfrak{gl}(1|1)$.  Here $a,b$ are distinct nonzero complex numbers.

\[
R_1=\begin{pmatrix}
a&0&0&0\\
0&a+b&-b&0\\
0&a&0&0\\
0&0&0&a
\end{pmatrix},\ \   R_2=\begin{pmatrix}
a&0&0&0\\
0&a+b&-b&0\\
0&a&0&0\\
0&0&0&b
\end{pmatrix}
\]
These yield 8-dimensional representations for $B_3$ by 
\[
A_i=\rho_i(\sigma_1)=R_i\otimes \begin{pmatrix}
1&0\\
0&1
\end{pmatrix},\ \ \ B_i=\rho_i(\sigma_1)=\begin{pmatrix}
1&0\\
0&1
\end{pmatrix}\otimes R_i 
\]
If $a^2-ab+b^2\neq 0$, and also, for $i=2$,  \[(2a^2 - ab - a + b)\big((6b - 1)a^2 + (-8b^2 + b)a + 4b^3\big)(a^2 - ab - b^2)\neq 0\] these representations are completely reducible, with the following irreducible decompositions:
\[A_1= a\oplus a \oplus a\oplus a\oplus \begin{pmatrix}
a&a\\
0&b
\end{pmatrix} \oplus \begin{pmatrix}
a&a\\
0&b
\end{pmatrix},\ \  B_1= a\oplus a \oplus a\oplus a\oplus \begin{pmatrix}
b&0\\
-b&a
\end{pmatrix} \oplus \begin{pmatrix}
b&0\\
-b&a
\end{pmatrix}
\]
\[
A_2=a\oplus a \oplus b\oplus b\oplus \begin{pmatrix}
a&a\\
0&b
\end{pmatrix} \oplus \begin{pmatrix}
a&a\\
0&b
\end{pmatrix},\ \  B_1= a\oplus a \oplus b\oplus b\oplus \begin{pmatrix}
b&0\\
-b&a
\end{pmatrix} \oplus \begin{pmatrix}
b&0\\
-b&a
\end{pmatrix}
\]
When $a,b$ satisfy $a^2-ab+b^2=0$ and, for $i=2$, $(12b^2 - 6b + 1)(b^2+a-b-2ab)\neq0$, we have the following decomposition into indecomposables:
\[
A_1=a\oplus a \oplus 
\begin{pmatrix}
a&0&0\\
0&a&0\\
0&0&b
\end{pmatrix}\oplus
\begin{pmatrix}
a&0&0\\
0&a&0\\
0&0&b
\end{pmatrix},\ \ B_1=a\oplus a \oplus 
\begin{pmatrix}
a&1&b-a\\
0&a&0\\
0&1&b
\end{pmatrix}\oplus
\begin{pmatrix}
a&1&b-a\\
0&a&0\\
0&1&b
\end{pmatrix}
\]
\[
A_2=a\oplus b \oplus 
\begin{pmatrix}
a&0&0\\
0&a&0\\
0&0&b
\end{pmatrix}\oplus
\begin{pmatrix}
b&0&0\\
0&b&0\\
0&0&a
\end{pmatrix},\ \ B_2=a\oplus b \oplus 
\begin{pmatrix}
a&1&b-a\\
0&a&0\\
0&1&b
\end{pmatrix}\oplus
\begin{pmatrix}
b&1&a-b\\
0&b&0\\
0&1&a
\end{pmatrix}
\]
The two distinct 3-dimensional indecomposable representations are both of the form found in Theorem \ref{thm:n=3_W=1} \ref{n=3_W=1_A2B}.

\textbf{Extensions} It is natural to consider classifying indecomposable but not irreducible representations in higher dimensions.  One way is to simply extend our method, which is feasible but much more complicated.
The other way is to use the result in paper \cite{TW01} about irreducible representations we can always reduce $A, B$ to have the following form:
$
A=\begin{pmatrix}
A_1 &\star\\
0&A_2
\end{pmatrix}
$, 
$
B=\begin{pmatrix}
B_1 &\star\\
0&B_2
\end{pmatrix}
$, where $(A_1,B_1)$ gives an irreducible representation.  Although we know their forms from \cite{TW01}, it is inefficient to keep all the unknown variables and directly calculate the equations given by the braid relation. Thus one needs to do some analysis for the forms of $A_2$ and $B_2$ to simplify the calculations. Constructing strictly indecomposable representation directly is also

\iffalse
\indent Another way to extend our results is to relate these representations of the braid group to the representations of Hecke algebra, B.M.W algebra and Hecke-like algebra, for example, in theorem 3.3, (1), (2), (3) correspond these algebras respectively, from which we can at least deduce that these algebras (for some coefficients) are not semisimple, hence one can study some properties of such algebras by analysing those representations.
\fi

\section{Appendix}\label{sec:appendix}

Here we give more details of the proof of Theorem \ref{thm:n=3_W=1}, \ref{thm:n=3_W=2}. The key algorithm we use in Maple is to find a Groebner basis.  With a fixed Jordan form of $A$, we start with the polynomial entries of $ABA=BAB$, which are in terms of the eigenvalues of $A$ and the undetermined parameters of $ABA$.  We enforce non-vanishing of the eigenvalues as well the requirement that they be distinct by means of a single polynomial with a new variable $k$, e.g. $\lambda_1\lambda_2(\lambda_1-\lambda_2)k-1$.  We use an elimination order to find a set of polynomial consequences that do not use the variable $k$, and then run the Groebner basis algorithm again with a pure lexicographical order to simplify the equations, which can be analyzed by hand. 

\iffalse
\begin{align*}
&with(Groebner):\\
&map(noK,factor(Basis(`union`({r2,(L1-L2)*L1*L2*K-1}, numer(rels))\\
&,lexdeg([b1,b2,b3,r1,r2,r3,L1,L2],[K]))));
\end{align*}

Where $rels$ is the set of relations (polynomials) from $ABA=BAB$, and in order to include nonzero relations we make use of variable $K$, for example, relations $r2=0, L1-L2,L1,L2\neq 0$ are added in above. Finally $map(noK,\cdots)$ command will omit all the polynomials involving $K$ (hence it is necessary to verify our solutions), which is defined in Maple by:
$$
 noK := p\rightarrow `if(\{K\}\ intersect\ indets(p) = \{\}, p, NULL); 
$$

Once new relations are found by analyzing the resulting set, we can add those and use the Groebner basis command again to further simplify the set of relations. In general, we can solve by hand after simplification.\\

\fi

To illustrate we will give detailed proof for the case $\dim(W)=1$ and $A$ is a diagonal matrix with three distinct eigenvalues (Theorem \ref{thm:n=3_W=1}, \ref{case:n=3_W=1_case1}).  We begin with the form $$A=\begin{pmatrix}
\lambda_1&0&0\\
0&\lambda_2&0\\
0&0&\lambda_3
\end{pmatrix},\ \ 
ABA=\begin{pmatrix}
\lambda^3_1&\beta_1&\gamma_1\\
0&\beta_2&\gamma_2\\
0&\beta_3&\gamma_3
\end{pmatrix}.$$

\begin{enumerate}[label=\upshape \textbf{Case\ 1.\arabic*}, itemindent=4.5em, leftmargin=0pt]
 \item \label{case:1.1}: If
$\gamma_2=\beta_3=0$, then by indecomposability and invertibility, all other variables are nonzero, and we may assume $\beta_1=\gamma_1=1$, by conjugating the matrix $\begin{pmatrix}
1&0&0\\
0&\beta_1&0\\
0&0&\gamma_1
\end{pmatrix}$.
We get a set of polynomials from equation $ABA=BAB$.  Using the Groebner basis algorithm in Maple we get the following set of new polynomials that must vanish:
$$
\lambda_1-\lambda_2-\lambda_3,\  \beta_2-\gamma_3,\ \lambda_2^2+\lambda_2\lambda_3+\lambda_3^2,\ \lambda_2^3-\gamma_3
.$$
Solving them, we obtain:

$
B=\begin{pmatrix}
\lambda_1&\frac{1}{\lambda_1\lambda_2}&\frac{1}{\lambda_1\lambda_3}\\
0&\lambda_2&0\\
0&0&\lambda_3
\end{pmatrix}
$.  Conjugating by 
$
\begin{pmatrix}
1&0&0\\
0&\frac{1}{\lambda_1\lambda_2}&0\\
0&0&\frac{1}{\lambda_1\lambda_3}
\end{pmatrix}
$, we have \ref{n=3_W=1_A1B1}.

\item \label{case:1.2}: If 
$\gamma_2\neq 0,\beta_1=\beta_3=0$, again by indecomposability and invertibility all other variables are nonzero. Hence by a similar argument, one can assume $\gamma_1=\gamma_2=1$. Using Maple we get a similar set:
$$
\lambda_1+\lambda_2-\lambda_3,\  \beta_2+\gamma_3,\ \lambda_2^2-\lambda_2\lambda_3+\lambda_3^2,\ \lambda_2^3-\gamma_3.
$$
Solve them we get \ref{n=3_W=1_A1B2}.

\item \label{case:1.3}:
If $\gamma_2\neq 0,\beta_1\neq0,\beta_3=0$, then $\gamma_3\neq 0$. Using Maple we get the following set:
$$
\lambda_1-\lambda_2+\lambda_3,\  \beta_2+\gamma_3,\ \lambda_2^2-\lambda_2\lambda_3+\lambda_3^2,\ \beta_1\gamma_2+2\gamma_1\gamma_3,\ \lambda_2^3-\gamma_3.
$$
We have $\gamma_1\neq 0$ from $\beta_1\gamma_2+2\gamma_1\gamma_3=0$, hence we set $\gamma_1=\beta_1=1$. 
Using the Groebner command again (or directly plugging in the previous set), we get:
$$
\lambda_1-\lambda_2+\lambda_3,\  \beta_2+\gamma_3,\ \lambda_2^2-\lambda_2\lambda_3+\lambda_3^2,\ \gamma_2+2\gamma_3,\ \lambda_2^3-\gamma_3.
$$
Solving them, we have \ref{n=3_W=1_A1B3}.
\item \label{case:1.4}:  If
$\gamma_2,\beta_3\neq 0$, then we get a longer list of polynomials from the Groebner basis command. The following are five of them:

\begin{align*}
&\beta_2+\gamma_3,\  \gamma_1(\lambda_1^2+\lambda_2\lambda_3),\ \beta_1(\lambda_1^2+\lambda_2\lambda_3),\\
&\lambda_1\beta_1\gamma_2+\lambda_1\gamma_1\gamma_3+\lambda_2\gamma_1\gamma_3-\lambda_3\gamma_1\gamma_3,\ -\lambda_1\beta_1\gamma_3+\lambda_1\beta_3\gamma_1+\lambda_2\beta_1\gamma_3-\lambda_3\beta_1\gamma_3.
\end{align*}
Together with indecomposability ($(\gamma_1,\beta_1)\neq (0,0)$) implies $\lambda_1^2+\lambda_2\lambda_3=0$ and all the variables are nonzero, moreover $\lambda_1^2+\lambda_2^2\neq 0$, since $\lambda_2\neq\lambda_3$. Now we can set $\beta_1=\beta_3=1$, The Groebner basis command gives a shorter list, it turns out that the following first five relations are enough to get the solution:
$$
\beta_2+\gamma_3,\ \lambda_3\lambda_2+\lambda_1^2,\ \lambda_1\gamma_1-\lambda_1\gamma_3+\lambda_2\gamma_3-\lambda_3\gamma_3,\ \gamma_1^2-2\gamma_1\gamma_3-\gamma_2,\ \lambda_2\lambda_3\lambda_1-\gamma_1+\gamma_3.
$$
Solving them we have \ref{n=3_W=1_A1B4}.
\end{enumerate}

Since by permuting the basis $e_2$ and $e_3$, one can switch the roles played by $\gamma_2$ and $\beta_3$, \ref{case:1.2} and \ref{case:1.3} give all the solutions for the cases when one of $\gamma_2,\beta_3$, is zero or non-zero hence \ref{case:1.1}-\ref{case:1.4} give all the possible solutions when $A$ is a diagonal matrix with three distinct 
eigenvalues. 

Moreover, only diagonal matrices commute with $A$, and conjugating $B$ by a diagonal matrix won't change the positions where the entries are zero or non-zero, therefore the distinctness follows easily.  As for indecomposability, since the eigenvalues are all distinct, the only possible decompositions will be  $Ke_i\oplus K\{e_j,e_k\} $ for distinct $i,j,k$ and $1\leq i,j,k\leq 3$, which is impossible for $B$.

\iffalse
hence the corresponding $0,1$-matrix  \YR{[It is not very clear, what I meant is if the entry is non-zero then change it to one, distinctness and indecomposability can be deduced by only looking at the associated 0,1-matrix. I restate the proof, avoiding this terminology]} is fixed. In this case, representation is indecomposable if and only if the corresponding $0,1$-matrix \ecr{It is here again, maybe just define what is meant?  To any matrix we associate a matrix with entries in $\{0,1\}$, by replacing non-zero entries with $1$.} for $B$ is indecomposable, the strictly indecomposability now follows immediately. As for distinctness, in each subcase the representations differ by eigenvalues, and in different subcases the representations are inequivalent because the corresponding $0,1$-matrix for $B$ are different. \ecr{[delete this sentence? Seems necessary]}
\fi

Now for \ref{case:n=3_W=1_case2} in Theorem \ref{thm:n=3_W=1}, we begin with the following matrix

$$A=\begin{pmatrix}
\lambda_1&0&0\\
0&\lambda_1&0\\
0&0&\lambda_2
\end{pmatrix},\ \ 
ABA=\begin{pmatrix}
\lambda^3_1&\beta_1&\gamma_1\\
0&\beta_2&\gamma_2\\
0&1&\gamma_3
\end{pmatrix}.$$

\begin{enumerate}[label=\upshape \textbf{Case\ 2.\arabic*}, itemindent=4.5em, leftmargin=0pt]
 \item \label{case:2.1}:
 $\gamma_2\neq 0$, using the Groebner basis command with, in addition to $ABA=BAB$, relation $\gamma_2(\lambda_1 - \lambda_2)\lambda_1\lambda_2(\beta_2\gamma_3 - \gamma_2)K - 1$ added, we have the following first several terms:
 \[
\beta_2+\gamma_3, \ \lambda_2\beta_1\gamma_3 - \lambda_1\gamma_1,\  \lambda_1^2\lambda_2^2-\lambda_1\gamma_3 + \lambda_2\gamma_3,\  \lambda_1\lambda_2^2\beta_1 - \beta_1\gamma_3 + \gamma_1.
 \]
 From the third term, we have $\gamma_3\neq 0$, which implies $\beta_2\neq 0$ from the first term. Moreover, from the second term $\beta_1=0\Leftrightarrow\gamma_1\neq 0$, by indecomposability they are both nonzero, hence we can assume $\beta_1=1$. Update our set of relations and use the Groebner basis command again, we have a list of relations, the following are the first four terms:
 \[
 \beta_2 + \gamma_3,\  \lambda_1\gamma_1 - \lambda_2\gamma_3,\  \lambda_1\lambda_2^2 + \gamma_1 - \gamma_3,\  \lambda_1\gamma_3^2 + \lambda_2\gamma_1\gamma_3 - \lambda_2\gamma_3^2 + \lambda_2\gamma_2.
 \]
 Which gives the solution: $
 B=\begin{pmatrix}
\lambda_1&\frac{1}{\lambda_1^2}&\frac{\lambda_2^2}{\lambda_1-\lambda_2}\\
0&\frac{-\lambda_2^2}{\lambda_1-\lambda_2}&\frac{\lambda_1^2\lambda_2^2(\lambda_1^2-\lambda_1\lambda_2+\lambda_2^2)}{(\lambda_1-\lambda_2)^2}\\
0&\frac{1}{\lambda_1\lambda_2}&\frac{\lambda_1^2}{\lambda_1-\lambda_2}
\end{pmatrix}$, our assumption $\gamma_2\neq 0$ implies $\lambda_1^2-\lambda_1\lambda_2+\lambda_2^2\neq 0$.
Conjugating $B$ by 
$\begin{pmatrix}
1&1&0\\
0&\frac{\lambda_1^2(\lambda_1^2-\lambda_1\lambda_2+\lambda_2^2)}{\lambda_1-\lambda_2}&0\\
0&0&1
\end{pmatrix}$ (it commutes with $A$), we see the solution is decomposable. (Equivalently, it decomposes as $Ke_1\oplus K\{e_2+\frac{\lambda_1^2(\lambda_1^2-\lambda_1\lambda_2+\lambda_2^2)}{\lambda_1-\lambda_2}e_3,e_3\}$)

\item \label{case:2.2}: $\gamma_2=0$, similarly, we have several terms (part of the list):
\[
\beta_2 + \gamma_3,\  \lambda_1^2 - \lambda_1\lambda_2 + \lambda_2^2,\  \lambda_2^3 - \gamma_3,\  \lambda_2\beta_1\gamma_3 - \lambda_1\gamma_1.
\]
 \end{enumerate}
This implies $\gamma_1,\gamma_3,\beta_1,\beta_2$ are all nonzero, again we assume $\beta_1=1$. Then solve a short list of equations, we have \ref{n=3_W=1_A2B}, its indecomposability will be discussed later.

There are two types of $W$ in \ref{case:n=3_W=1_case3} of Theorem \ref{thm:n=3_W=1}, First one gives the following representation:
$$A=\begin{pmatrix}
\lambda_1&0&0\\
0&\lambda_2&1\\
0&0&\lambda_2
\end{pmatrix},\ \ 
ABA=\begin{pmatrix}
\lambda^3_1&\beta_1&\gamma_1\\
0&\beta_2&\gamma_2\\
0&\beta_3&\gamma_3
\end{pmatrix}.$$

\begin{enumerate}[label=\upshape \textbf{Case\ 3.\arabic*}, itemindent=4.5em, leftmargin=0pt]
 \item \label{case:3.1}:
 $\beta_3=0$, using the Groebner basis command, we directly get $b_1=0$ and $\lambda^2_1-\lambda_1\lambda_2+\lambda^2_2=0$, then \ref{n=3_W=1_A3B1} is obtained by simple calculations.
 \item \label{case:3.2}:
 $\beta_3\neq 0$, using the Groebner basis command, the following is a subset of the terms we obtain:
 \[
 \gamma_1(\lambda_1^2+\lambda_2^2),\ \beta_1(\lambda_1^2+\lambda_2^2),\  -\lambda_1\beta_1\gamma_3 + \lambda_1\beta_3\gamma_1 - \beta_1\beta_3.
 \]
The second and third terms together with the indecomposability imply $\lambda_1^2+\lambda_2^2=0$. The last term together with the indecomposability implies $\beta_1\neq 0$, hence we assume $\beta_1=1$. Now we simply the set of relations and we obtain the following subset,
\begin{eqnarray*}
   \beta_2 + \gamma_3,\  \lambda_1^2+\lambda_2^2, &&\  \lambda_1\lambda_2^2 - \gamma_1\beta_3 + \gamma_3,\  \lambda_2^2\beta_3 - \beta_3\gamma_2 - \gamma_3^2,\\&& \lambda_1\gamma_1\gamma_3 + \lambda_1\gamma_2 - \beta_3\gamma_1,\  \gamma_1\beta_3\lambda_1 - \gamma_3\lambda_1 - \beta_3.
\end{eqnarray*}

This is enough to get the solution:
\[B=\begin{pmatrix}
\lambda_1&\frac{1}{\lambda_1\lambda_2}&\frac{1}{\lambda^3_1}-\frac{1}{\lambda_2^3}-\frac{\gamma_3}{\lambda_1^5\lambda_2}\\
0&\lambda_2-\frac{\gamma_3}{\lambda_2^2}&\frac{\gamma_3}{\lambda^4_1\lambda^2_2}\\
0&\lambda^2_1&\lambda_2+\frac{\gamma_3}{\lambda_2^2}
\end{pmatrix}.
\]
Conjugating by  $\begin{pmatrix}
\frac{1}{\lambda_1\lambda_2}&0&0\\
0&1&-\frac{\gamma_3}{\lambda^2_1\lambda^2_2}\\
0&0&1
\end{pmatrix}
$ (it commutes with $A$), we have \ref{n=3_W=1_A3B2}.

\item\label{case:3.3}:
The second type of $W$ gives the following representation:
$$A=\begin{pmatrix}
\lambda_1&1&0\\
0&\lambda_1&0\\
0&0&\lambda_2
\end{pmatrix},\ \ 
ABA=\begin{pmatrix}
\lambda^3_1&\beta_1&\gamma_1\\
0&\beta_2&\gamma_2\\
0&\beta_3&\gamma_3
\end{pmatrix}.$$
We first assume $\gamma_2\neq 0$, and up to equivalence, we may assume $\gamma_2=1$. Applying the Groebner basis command, we obtain the following short list of relations,
\[
\gamma_1+\gamma_2,\ \beta_2 + \gamma_3,\  3\gamma_3^2 - \beta_3,\  3\gamma_1\gamma_3 + \beta_1,\  \beta_1\gamma_3 + \beta_3\gamma_1,\  \lambda_2^3 + 2\gamma_3,
\]
which gives the solution $B=\begin{pmatrix}
-\lambda_2&\frac{3\gamma_1\lambda_2-1}{2}&\frac{-\gamma_1\lambda_2-1}{\lambda_2^3}\\
0&\frac{\lambda_2}{2}&-\frac{1}{\lambda^2_2}\\
0&-\frac{3\lambda^4_2}{4}&-\frac{\lambda_2}{2}
\end{pmatrix}$.  Conjugating by  $\begin{pmatrix}
-\frac{1}{\lambda^2_2}&-\frac{\gamma_1\lambda_2+1}{\lambda^3_2}&0\\
0&-\frac{1}{\lambda^2_2}&0\\
0&0&1
\end{pmatrix}$, we obtain \ref{n=3_W=1_A4B1}.

\item \label{case:3.4}: 
$\gamma_2=0$, this implies $\gamma_1\neq 0$, otherwise $Ke_3$ will be a 1-dimensional common invariant subspace, which is the $W$ of the first type in \ref{case:n=3_W=1_case3} (see \ref{case:3.1}, \ref{case:3.2}). Therefore, we assume $\gamma_1=1$.  Applying the Groebner basis command, we have the following subset:
\[
\beta_2 + \gamma_3,\  \lambda_2^2 - \lambda_1\lambda_2 + \lambda_1^2,\  \lambda_2^3 - \gamma_3,\  \lambda_1\beta_1\gamma_3 - \lambda_1\beta_3 + \lambda_2\beta_3 + 3\gamma_3^2.
\]
Solving these equations gives the solution: $B=\begin{pmatrix}
\lambda_1&-2+\frac{\beta_1}{\lambda_1^2}&\frac{1}{\lambda_1\lambda_2}\\
0&\lambda_1&0\\
0&3\lambda^3_1-\beta_1\lambda_1&-\frac{\lambda^3_1}{\lambda_2^2}
\end{pmatrix}$.  By simply changing the basis $\tilde{e}_3:=\lambda_1\lambda_2e_3$ and setting $\beta:=\frac{3\lambda^2_1-\beta_1}{\lambda_2}$, we obtain \ref{n=3_W=1_A4B2}.
\end{enumerate}

 Other cases in Theorem \ref{thm:n=3_W=1}, \ref{thm:n=3_W=2} are less complicated and follow similar procedures, so we will now turn to proving our solutions are distinct and indecomposable.
 
 \textbf{Distinctness}: First note that the representations in Theorem \ref{thm:n=3_W=1} and Theorem \ref{thm:n=3_W=2} are inequivalent, and that different Jordan forms of $A$ give inequivalent representations. Moreover, except \ref{n=3_W=1_A4B1}, all other representations depend only on eigenvalues and relations among them. Hence only \ref{n=3_W=1_A3B1} and \ref{n=3_W=1_A4B1} need to be analyzed.
 
 In \ref{n=3_W=1_A3B1}, eigenvectors corresponding to two eigenvalues are two $1$-dimensional common invariant subspaces, which is not the case in \ref{n=3_W=1_A4B1}. Moreover different $\beta$ give different representations, in fact, all the matrices commuting with $A$ in \ref{n=3_W=1_A4B1} are of the form: $M=\begin{pmatrix}
 a&b&0\\
 0&a&0\\
 0&0&c
 \end{pmatrix}
 $.  Direct calculation shows the $\beta$ term is invariant under conjugating $B$ by $M$.
 
 \textbf{Indecomposability}: Except from \ref{n=3_W=1_A2B} in Theorem \ref{thm:n=3_W=1}, the invariant subspaces decomposition of $A$ in all other cases have only finitely many choices, since all eigenspaces are one dimensional. It is not hard to see they are indecomposable. For \ref{n=3_W=1_A2B}, since by construction, $Ke_1$ is a common invariant subspace and $K\{e_1,e_2\}$ is not, we cannot have any other one-dimensional common invariant subspaces. Therefore if the representation is decomposable, it can only decompose as $Ke_1\oplus K\{ae_1+be_2,e_3\}$ for some $b\neq 0$, which is impossible for $B$.

 \bibliographystyle{spmpsci}
 \bibliography{Reference}

\iffalse

\fi

\end{document}